\documentclass[12pt]{article}

\usepackage{amsmath}
\usepackage{amssymb}
\usepackage{amsthm}
\usepackage{amsfonts}
\usepackage{amscd}
\usepackage{graphicx}
\usepackage[T1]{fontenc}    
\usepackage[usenames,dvipsnames]{color}
\usepackage{caption}
\usepackage[numbers,sort&compress]{natbib}
\usepackage{hyperref}
\usepackage{geometry}
\usepackage[medium]{titlesec}
\usepackage{etoolbox} 
\usepackage[capitalise]{cleveref} 
\usepackage{algorithm}
\usepackage{algpseudocode}


\newtheorem{lemma}{Lemma}
\newtheorem{proposition}[lemma]{Proposition}

\theoremstyle{definition}

\newtheorem{example}[lemma]{Example}
\newtheorem{remark}[lemma]{Remark}


\definecolor{colLinkBlue}{RGB}{23,111,192} 
\definecolor{colCiteGreen}{RGB}{8,144,8} 
\definecolor{colP}{RGB}{170,0,255} 
\definecolor{colLP}{RGB}{255,170,255} 
\definecolor{colBrown}{RGB}{156,99,49} 
\definecolor{colGray}{RGB}{128,128,128} 
\definecolor{colGreen}{RGB}{0,204,0} 
\definecolor{colO}{RGB}{255,170,0} 
\definecolor{colLB}{RGB}{143,189,211} 

\geometry{a4paper,left=35mm,right=35mm,top=15mm,bottom=25mm}
\linespread{1.3}
\setlength{\parskip}{2mm}\setlength{\parindent}{0mm} 
\hypersetup{colorlinks,urlcolor=colLinkBlue,citecolor=colCiteGreen,linkcolor=colLinkBlue}
\setcounter{tocdepth}{1}
\bibliographystyle{plainurl}
\titlespacing*{\section}{0pt}{0mm}{0mm}
\titlespacing*{\subsection}{0pt}{0mm}{0mm}
\titlespacing*{\paragraph}{0pt}{0mm}{0mm}
\newcommand{\myspace}{\setlength{\abovedisplayskip}{1mm}\setlength{\belowdisplayskip}{1mm}}
\captionsetup{textfont=it,labelfont=bf,belowskip=-3mm}


\algrenewcommand\algorithmicwhile{\textbf{While}}
\algrenewcommand\algorithmicfor{\textbf{For}}
\algrenewcommand\algorithmicdo{\textbf{Do}}
\algrenewcommand\algorithmicif{\textbf{If}}
\algrenewcommand\algorithmicthen{\textbf{Then}}
\algrenewcommand\algorithmicelse{\textbf{Else}}
\algrenewcommand\algorithmicend{\textbf{End}}
\algrenewcommand\algorithmicreturn{\textbf{Return}}

\newcommand{\ci}{\mathit{i}}
\renewcommand{\P}{{\mathbb{P}}}
\newcommand{\R}{{\mathbb{R}}}


\begin{document}
\myspace

\title{Calibrating Figures}
\author{Niels Lubbes, Josef Schicho}

\maketitle

\begin{abstract}
It is known that a camera can be calibrated using three pictures of either squares, or spheres,
or surfaces of revolution. We give a new method to calibrate a camera with the picture of a single torus.

{\bf Keywords:} elliptic absolute, torus, dual surface.
\end{abstract}

\section*{Introduction}
\label{sec:0}

Central projection is a mathematical model of a camera that makes a 2D picture of a 3D scenario. In projective geometry,
a central projection is a map~$\pi$ from projective three-space minus a point -- the center $o$ -- to the projective plane. The points in $\P^2$
correspond to lines through $o$; any $q\ne o$ is mapped to the point corresponding to the unique line through $o$ and $q$.
A {\em calibrated camera} allows to determine the viewing angle at $o$ between two points $p,q$ in 3-space when their images $\pi(p),\pi(q)$ are given
(for instance, human sight can do that). The viewing angle is equal to the {\em elliptic distance} of the image points. A classical approach to
elliptic geometry is to derive elliptic distance from the {\em elliptic absolute}, a conic in $\P^2$ defined by a quadratic form with real
coefficients that does not have any real points. Hence a camera is calibrated as soon as the equation of the elliptic absolute is known.

The set of possible elliptic absolutes in $\P^2$ is five-dimensional. It is well-known that special figures in Euclidean space, when visible in the picture,
give equational constraints for the elliptic absolute. The simplest such figure is a square. The image of four vertices of a planar square give
rise to two linear equational constraints for the elliptic absolute. Consequently, the elliptic absolute can be determined uniquely from the pictures of three squares
(see \cite[Example~8.18]{HartleyZisserman}). Another well-known example is provided by a surface of revolution. Its picture has a reflection symmetry,
i.e., a projective transformation of order two preserving it. This symmetry also preserves the elliptic absolute, which
again gives rise to two linear constraints for the elliptic absolute (see \cite{WMC}). A special case is the sphere. Its picture is a conic,
and a plane conic has a two-dimensional set of projective reflection symmetries. Unfortunately, not all these reflections also preserve the elliptic
absolute. Still, it is possible to recover the elliptic absolute from the pictures of three spheres (see \cite{ZZW}).

The new result in this paper deals with another special case of a surface of revolution, namely the torus.
In \Cref{sec:2}, we show that the picture of a single torus determines finitely many candidates for the elliptic absolute, assuming a generic camera position.
\Cref{sec:1} explains a well-known relation between camera calibration and elliptic geometry which is fundamental for the remaining part.
\Cref{sec:2} explains calibration by surfaces of revolution; this is well-known, but we include this discussion because we also need it
for calibrations with a torus.

\section{Euclidean Absolute and Elliptic Absolute}
\label{sec:1}

In this paper, we work mainly with objects in the real projective plane $\P^2$ or real projective space $\P^3$.
At several places we also need to do constructions using non-real complex points.
In particular, we consider projective varieties that have no real points, despite being defined by forms with real coefficients.

Let $C_A\subset\P^3$ be an irreducible conic curve defined by a linear and a quadratic form with real coefficients, but without any real points.
In a suitable projective coordinate system, this curve is defined by $w$ and $x^2+y^2+z^2$. The subgroup of the projective linear group
consisting of all projective automorphisms that preserve $C_A$
also preserves angles and the ratio between lengths. In the sense of Klein's Erlangen program, specifying the group defines the angles and distances
between lengths. In its role as an object defining angles in $\P^3$, we call $C_A$ the {\em Euclidean absolute}, and the unique plane containing $C_A$
the {\em infinite plane}. In the above coordinates, the infinite plane is defined by $w$. Throughout the paper, we assume that there
is a fixed Euclidean absolute. This implies that we may speak of the angle between two given lines.

We consider the central projection $\pi:(\P^3\setminus\{o\})\to\P^2$ defined by a matrix in~$\R^{3\times 4}$. The one-dimensional
kernel of this map is the projective representation of the center $o$.
If the center $o$ is not lying in the infinite plane, then we may assume that $o=(0:0:0:1)$ and that $\pi$ is the map $(x:y:z:w)\mapsto (x:y:z)$.
If the Euclidean absolute is defined by $w$ and $x^2+y^2+z^2$, then the image $\pi(C_A)$ is the conic defined by $x^2+y^2+z^2$ in $\P^2$.
We call $\pi(C_A)$ the {\em elliptic absolute} and denote it by $C_E$.
It defines a metric on the real points of $\P^2$ as follows. If $p,q\in\P^2$, then the line through $p,q$ intersects $C_E$ in two complex
conjugated points $a,\bar{a}$. The cross ratio $(p,q;a,\bar{a})$ is a complex number with modulus $1$. Its logarithm is $r\ci$, where $r$
is real in the interval $[-\pi,\pi)$. Then $d(p,q):= \frac{|r|}{2}$. It is straightforward to show that $d(p,q)$ is equal to the Euclidean angle
between the lines $\pi^{-1}(p)$ and $\pi^{-1}(q)$ in $\R^3$.

A three-dimensional {\em scenario} consists of geometric objects in $\P^3$ such as points, lines, curves and surfaces that may have
Euclidean features (e.g. angles).
If $S$ is an algebraic surface, then we may mathematically define the {\em picture}~$S'$ of~$S$
as the Zariski closure of the critical value set of the restriction
of the projection map to the smooth part of $S$ (which is a differential manifold).
This is an algebraic curve. If $\pi$ is the map $(x:y:z:w)\mapsto(x:y:z)$,
then the equation of $S'$ is a factor of the discriminant of the equation of $S$ with respect to $w$.
The {\em picture} of a point or curve object is its image under $\pi$.
Finally, the {\em picture} of a scenario consisting of several objects is the union of the pictures of the individual objects.

Calibration from a picture is the task of finding the equation of $C_E$ when the picture
of a scenario is given. Here is the description of the haystack in which we want to find the needle:
the equation of of the absolute conic has six coefficients, and it is unique up to multiplication by
a scalar. So the set of irreducible conics without real points corresponds to an open subset $V\subset\P^5$.

\begin{figure}[!ht]
\begin{center}
\includegraphics[height=5cm]{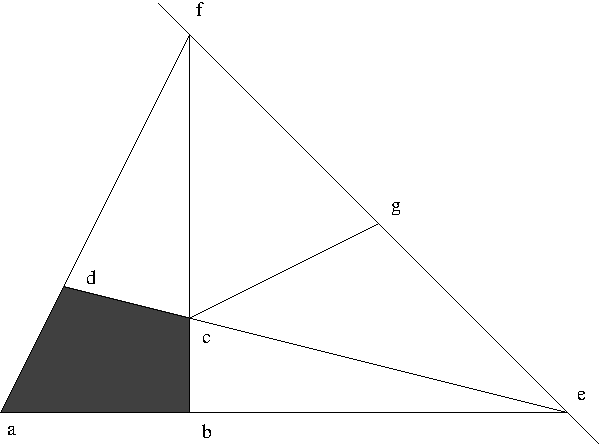}
\caption{The square $abcd$ has two pairs of parallel lines, namely ($L_{ab},L_{cd})$ and $(L_{bc},L_{ad})$.
	They intersect at the infinite points $e$ and $f$. The line $L_{ef}$ is the infinite line of the plane in which the square lies.
	The diagonal $L_{ac}$ intersects the infinite lines in a third infinite point $g$.
	The two most important points $z$ and $\bar{z}$ also lie on the infinite line; they cannot be shown because they are not real.
	These complex points lie on the Euclidean absolute, and their pictures can be used for calibration.}
\label{fig:sq}
\end{center}
\end{figure}

\begin{example}[Squares]
Let $a,b,c,d\in\P^3$ be four coplanar points, not on the infinite plane, forming a square.
The lines $L_{ab}$ and $L_{cd}$ are coplanar and therefore intersect in a point, say $e$.
Likewise, the lines $L_{bc}$ and $L_{ad}$ intersect in a point $f$. Then $L:=L_{ef}$ is the intersection of the plane $E_{abcd}$ with the infinite plane.
The diagonal $L_{ac}$ intersects $L$ in another point, say $g$. The configuration of these points is shown in Figure~\ref{fig:sq}.
The line $L$ intersects $C_A$ in two conjugate complex points $z,\bar{z}$ that cannot be shown in the figure.
Let $a',b',c',d',e',f',g',\bar{z},\bar{\bar{z}}\in\P^2$ be the images of all these points. Given $a',b',c',d'$, we can determine
\[ e' := L_{a'b'}\cap L_{c'd'}, \ f' := L_{b'c'}\cap L_{a'd'}, L' := L_{e'f'}, \ g' := L'\cap L_{a'c'} . \]
For the viewing angles of the points at infinity, we have $\angle(e,g)=\angle(g,f)=\pi/4$. This determines all cross ratios of the five points
$e,f,g,z,\bar{z}\in L$. These cross ratios are equal to the corresponding cross ratios $e',f',g',\bar{z},\bar{z}'\in L'$. Since we already know $e',f',g'$,
we can determine $\bar{z}$ and $\bar{z}'$. We have obtained two points on the elliptic absolute $C_E$. These give rise to two linear equations for the
coefficients of $C_E$.
The pictures of three squares allows for unique determination of the equation of $C_E$ (see \cite[Example~8.18]{HartleyZisserman}).
It should be pointed out that the squares
should not lie in parallel planes, otherwise we get the same points $\bar{z},\bar{z}'\in C_E$ twice.
\end{example}

The construction of two complex points on the elliptic absolute from a given square picture can be generalised to other planar scenarios.
We just need two parallel lines and two angles. In particular, any regular $n$-gon for $n\ge 4$ can be used.

\section{Surfaces of Revolution}
\label{sec:2}

This section gives a short treatment of a method for calibration using pictures of surfaces of revolution,
which has been introduced in \cite{WMC}.

Any one-dimensional group of rotations with a fixed axis is called a {\em revolution}.
A real algebraic surface is a {\em surface of revolution} if it is invariant
under some revolution. The axis is unique unless the surface is a sphere -- in that
case, any line through the center is an axis of revolution.

In the projective plane, we say that a projective automorphism $s:\P^2\to\P^2$ is
a {\em projective reflection} iff it has order two.

\begin{proposition} \label{prop:sym}
Any projective reflection $s:\P^2\to\P^2$ has a fixed line and an isolated fixed point.
If we choose projective coordinates such that the fixed line has equation $z=0$ and the fixed point
is $(0:0:1)$, then $s$ is represented by the diagonal matrix with entries $-1,-1,1$.
\end{proposition}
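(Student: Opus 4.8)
The plan is to lift $s$ to a representing matrix and exploit that an involution in $\PGL$ comes from an honest involution in $\mathrm{GL}_3$. First I would pick any $M\in\R^{3\times 3}$ with $s=[M]$. Since $s^2=\mathrm{id}$, we have $M^2=\lambda I$ for some $\lambda\in\R\setminus\{0\}$. I would then check that $\lambda>0$: otherwise write $\lambda=-\mu^2$ with $\mu>0$, so $N:=M/\mu$ satisfies $N^2=-I$, forcing $\det(N)^2=\det(-I)=(-1)^3=-1$, impossible for a real matrix. Hence, rescaling $M$ by $1/\sqrt{\lambda}$, I may assume $M^2=I$.

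Next, $M$ is a real involutory matrix, hence diagonalizable over $\R$ with every eigenvalue equal to $\pm 1$; write $\R^3=V_+\oplus V_-$ for the $(\pm 1)$-eigenspaces. Because $s$ has order \emph{exactly} two, $M\ne\pm I$, so neither $V_+$ nor $V_-$ is all of $\R^3$; since $\dim V_++\dim V_-=3$, the two dimensions are $1$ and $2$ in some order. Replacing $M$ by $-M$ (which represents the same $s$) if necessary, I may assume $\dim V_-=2$ and $\dim V_+=1$.

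Then the fixed-point set of $s$ in $\P^2$ is $\P(V_-)\cup\P(V_+)$: the projectivization of $V_-$ is a line, all of whose points are fixed, while the projectivization of $V_+$ is a single point not on that line, hence an isolated fixed point. Taking a basis of $V_-$ followed by a basis of $V_+$ as the new projective coordinates makes the fixed line the locus $z=0$, the isolated fixed point $(0:0:1)$, and $M=\mathrm{diag}(-1,-1,1)$. To land in exactly the stated normal form for \emph{any} prescribed pair (line $z=0$, point $(0:0:1)$ off it), I would finally note that $\PGL(3,\R)$ acts transitively on such flags, so a further change of coordinates conjugates $s$ into the asserted matrix.

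The one genuine obstacle is the step ruling out $\lambda<0$: it is precisely here that the oddness of the ambient dimension is used, and without it an order-two projective map need not lift to a linear involution. Everything else is routine linear algebra about real involutions.
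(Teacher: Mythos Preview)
Your argument is correct. The key steps --- ruling out $\lambda<0$ via $\det(N)^2=\det(-I)=-1$, rescaling to an honest linear involution, splitting $\R^3$ into $\pm 1$-eigenspaces of dimensions $2$ and $1$, and reading off the fixed line and isolated fixed point as their projectivizations --- are all sound, and the normalization $M=\mathrm{diag}(-1,-1,1)$ follows immediately from choosing a basis adapted to $V_-\oplus V_+$.

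The paper does not actually prove this statement; it simply cites \cite[Remark~1.1.19]{PW}. So your self-contained linear-algebra argument supplies strictly more than the paper does. One small remark: your final paragraph about $\PGL(3,\R)$ acting transitively on (line, off-line point) flags is unnecessary for the statement as phrased. The proposition does not ask you to move an \emph{arbitrary} such pair to $(z=0,\,(0{:}0{:}1))$; it assumes coordinates have already been chosen so that the fixed line and fixed point of $s$ sit there, and asks for the matrix of $s$ in those coordinates. Your eigenspace decomposition already forces $M$ to be scalar on $\{z=0\}$ and scalar on $\langle(0,0,1)\rangle$ with distinct scalars, hence $M=\mathrm{diag}(-1,-1,1)$ up to a global factor --- no transitivity needed.
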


\begin{proof}
See \cite[Remark 1.1.19]{PW}.
\end{proof}

In the elliptic plane, we say that a projective reflection $s:\P^2\to\P^2$ is
an {\em elliptic reflection} iff it also preserves $C_E$ (and consequently,
it preserves elliptic distance). Notice that any two elliptic reflections are conjugated.
This is in contrast to the Euclidean plane, where we have two types of automorphisms
of order two, namely reflections at a line and point reflections.

\begin{proposition} \label{prop:sr}
Let $R\in\P^3$ be a surface of revolution and let $R'\subset\P^2$ be its picture.
Then $R'$ is invariant under some elliptic reflection.
\end{proposition}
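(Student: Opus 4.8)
The plan is to exploit the fact that a revolution—a one-parameter group of rotations about an axis $A$—always admits a distinguished reflection symmetry of the three-dimensional scenario, and to push that symmetry down to the picture while tracking what happens to the absolute conic. Concretely, a surface of revolution $R$ with axis $A$ is invariant not only under the rotations but also under the Euclidean reflections in any plane containing $A$. Fix one such plane $H$; let $\sigma:\P^3\to\P^3$ be the reflection in $H$. Then $\sigma(R)=R$, and $\sigma$ is an isometry, so it preserves the Euclidean absolute $C_A$. The first step is therefore to record that $\sigma$ is a projective automorphism of $\P^3$ of order two which stabilises both $R$ and $C_A$.

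The second step is to transport $\sigma$ through the projection $\pi$. This is where a little care is needed: $\pi$ is only defined away from the center $o$, and $\sigma$ need not fix $o$. The key observation is that the rotations about $A$ act transitively on the planes through $A$, so among all the reflecting planes $H$ we are free to choose the unique one that contains both $A$ and the center $o$ (generically $o\notin A$, so this plane is well-defined). For that choice, $\sigma(o)=o$, and hence $\sigma$ descends to a well-defined projective automorphism $s:\P^2\to\P^2$ characterised by $s\circ\pi=\pi\circ\sigma$ on $\P^3\setminus\{o\}$. Since $\sigma$ has order two and is not the identity, and since it genuinely moves points off the plane $H$, the induced map $s$ also has order two (one checks it is not the identity because $\sigma$ does not act as a scalar on the $o$-cone), so $s$ is a projective reflection in the sense of \Cref{prop:sym}. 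Because $\sigma(R)=R$, its picture satisfies $s(R')=R'$: indeed the picture is defined intrinsically from $R$ and $\pi$, and conjugating the whole construction by $\sigma$ (which fixes $o$) carries critical points to critical points.

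The third step is to verify that $s$ preserves the elliptic absolute $C_E=\pi(C_A)$. This is immediate from $\sigma(C_A)=C_A$ together with the intertwining relation $s\circ\pi=\pi\circ\sigma$: applying $s$ to $C_E$ gives $s(\pi(C_A))=\pi(\sigma(C_A))=\pi(C_A)=C_E$. Hence $s$ is an elliptic reflection, and $R'$ is invariant under it, which is the claim.

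**The main obstacle** is the genericity and well-definedness bookkeeping in the second step: one must argue that the plane through $A$ and $o$ is genuinely a single plane (i.e. $o\notin A$, a generic-position hypothesis), that the resulting $s$ is a bona fide automorphism of $\P^2$ rather than something only defined on a dense open set, and—most delicately—that $s$ does not degenerate to the identity. The last point fails precisely when $\sigma$ acts on the pencil of lines through $o$ trivially, which would force $o$ to lie on the axis or $\sigma$ to be trivial; ruling this out under the stated generic-camera assumption is the crux. The sphere is the borderline case the paper has already flagged (the axis is not unique), and it is worth noting that even there the argument produces an elliptic reflection, just not a unique one. Everything else—order two, invariance of $R'$, invariance of $C_E$—follows formally from the intertwining relation once $s$ is in hand.
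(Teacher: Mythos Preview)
Your proposal is correct and follows essentially the same route as the paper: both arguments pick the reflection $\sigma$ in the unique plane through the axis and the projection center $o$, and use that $\sigma$ fixes $o$ to obtain an induced involution of $\P^2$ preserving both $R'$ and $C_E$. The paper is terser---it simply names the elliptic reflection with fixed line $L'=\pi(L)$ and asserts it preserves $R'$---whereas you construct the same map by explicitly descending $\sigma$ through $\pi$ and then verify it is an elliptic reflection; the extra bookkeeping you flag (well-definedness, non-triviality of $s$) is exactly what the paper's short proof leaves implicit.
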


\begin{proof}
This is a result of \cite[Subsection~3.3]{WMC}. Here is a short proof.
Any surface of revolution is invariant under any reflection at any plane passing
through the axis of revolution. Let $r:\P^3\to\P^3$ be the reflection at the plane
passing through the axis $L$ of revolution that fixes $R$ and through $o$, the
center of the projection $\pi:\P^3\to\P^2$. Let $L':=\pi(L)$, and let $r':\P^2\to\P^2$ be the elliptic
reflection with reflection line $L'$. Then $r'$ maps $R'$ to itself.
\end{proof}

Recall that the picture of an algebraic surface is an algebraic curve.
So, if the algebraic surface is a surface of revolution, then it should be helpful if we can compute
the projective reflections preserving a given algebraic curve $C$.

Let $F\in\R[x,y,z]$ be a homogenous polynomial and let $A\in\R^{3\times 3}$ be a nonsingular matrix.
We write $F\circ\phi_A$ for the polynomial obtained by substituting the three entries of $\phi_A(x,y,z)$ into $F$,
where $\phi_A$ is the linear map represented by the matrix $A$.

We are mainly interested in algebraic curves that have exactly one projective reflection symmetry.
If this is the case, then the unique symmetry is computed by the \Cref{alg}.

\renewcommand{\thealgorithm}{FindSymmetry} 
\begin{algorithm}[!ht]
\caption{}\label{alg}
\begin{algorithmic}[1]
  \Require $F\in\R[x,y,z]$, the defining polynomial form of an algebraic curve $C\subset\P^2$.
  \Ensure the unique projective reflection that preserves $C$ represented by its matrix in $\R^{3\times 3}$.
	If there is no such projective reflection, or if there are several, then return an error message.
  \Statex
  \State Let $A$ be a $3\times 3$ matrix with undetermined coefficients.
  \State Let $E$ be the set of all coefficients of $F-F\circ\phi_A$ with respect to $x,y,z$.
  \State Let $D$ be the set of entries of the matrix $A^2-I_2$ together with $\det(A)-1$.
  \State compute the zero set $Z$ of $E\cup D$
  \If{$|Z|=1$}
    \State instantiate $A$ by the unique solution and \Return $A$.
  \Else
    \State \Return{``not uniquely solvable''}
  \EndIf
\end{algorithmic}
\end{algorithm}

\begin{remark}
Using invariant theory, we can construct curves with any given finite number of projective
reflection symmetries. Figure~\ref{fig:sym}, taken from a workshop with high school students, shows algebraic curves
of degree 10 and 12 with 10 such symmetries. However, we believe that it is rare that the picture of a surface of revolution
has more than one symmetry axis, unless the surface is a sphere.
\begin{figure}[!hb]
\begin{center}
\includegraphics[height=3cm]{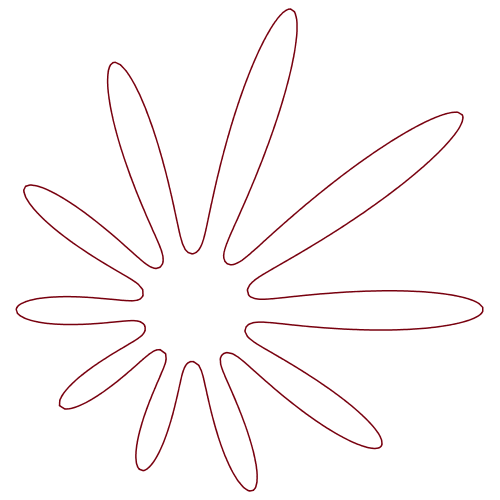}\hspace{3cm}
\includegraphics[height=3cm]{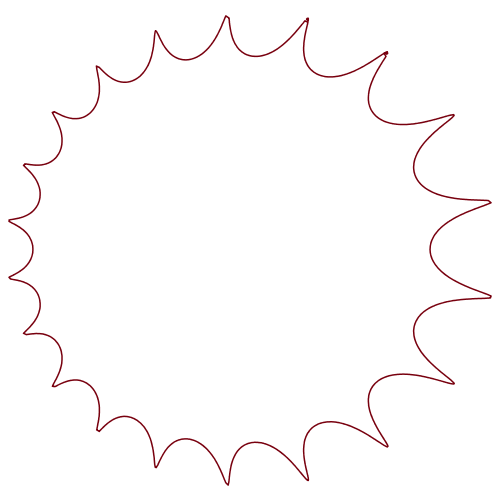}
\caption{Algebraic curves with 10 projective symmetry lines. The equations are $G-F^6/3+1/80=0$ (left) and $G^2-(F-1)^3-10+F^5=0$ (right),
	where $F=x^2+y^2$ and $G=(x+iy)^{10}+(x-iy)^{10}$. Despite the formula given for $G$, it is a polynomial with real coefficients,
	the imaginary parts cancel each other. For both curves, we apply a projective transformation to the equations, so that
	the symmetries are not Euclidean.}
\label{fig:sym}
\end{center}
\end{figure}

If the surface is a sphere, then its picture is a conic. The group of projective automorphisms of an irreducible conic $C$
is three-dimensional. (One way to see this is by looking at $C_E$: its automorphism
is the group preserving elliptic distance, which is indeed three-dimensional.) The subset of projective reflections preserving $C$
is infinite. Even worse: if $C$ is not equal to $C_E$, most projective reflections preserving $C$ do not
preserve $C_E$ and therefore they are not elliptic reflections. So, we do not get a constraint for $C_E$ from
the knowledge of projective reflections of the picture of a sphere.

However, two spheres can be considered as a single surface of revolution. In \cite{ZZW},
this observation is used to reduce the problem of calibration with given three pictures of spheres to the problem of calibration
with three pictures of surfaces of revolution.
\end{remark}

\begin{remark}
If $S$ is a semi-algebraic surface with boundary consisting of algebraic curves, then its picture is the union of
the critical value set of $\pi|_S$ and the images of the boundary curves.
In particular, if the surface of revolution is a cylinder, then its picture consists of two line segments and two conics. Both lines are
tangential to both ellipses. The four points of tangency are four distinguished points, and the action of the elliptic reflection
preserving the picture permutes these four points: any tangential intersection of an ellipse and a line is mapped to the tangential
intersection of the same ellipse with the other line. A projective transformation is uniquely determined by four
preimage/image point pairs, assuming that no three image points are collinear. Computationally, any preimage/image point pair gives rise
to two linear conditions on the eight parameters of a projective transformation. So, the computation of the unique projective
reflection that preserves the picture of a cylinder can be reduced to solving a system of linear equations. The same is true
for the picture of a truncated cone.
\end{remark}

Let us assume that we have found a projective reflection $s:\P^2\to\P^2$ preserving the picture $R'$ of some surface $R$ of revolution.
By \Cref{prop:sr}, $s$ is an elliptic reflection.
The following proposition shows that $s$ can be used to find
linear constraints for the equation vector of $C_E$.

\begin{proposition}
For any projective reflection $s:\P^2\to\P^2$, there is a four-dimensional space of quadratic forms
containing the defining forms of
all irreducible conics that are preserved by $s$.
\end{proposition}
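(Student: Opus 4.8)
The plan is to put $s$ into a normal form and then carry out a short computation with the eigenspaces of the induced action on quadratic forms. First, by \Cref{prop:sym} we may choose projective coordinates in which $s$ is represented by the diagonal matrix $\mathrm{diag}(-1,-1,1)$, so that the associated linear map is $\phi_s(x,y,z)=(-x,-y,z)$. It is enough to prove the claim for this particular $s$: for a general reflection $g s g^{-1}$, the desired four-dimensional space is the image under the coordinate change $g$ of the one found for $s$.

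Next, I would write a general quadratic form as
\[
 Q = a\,x^2 + b\,y^2 + c\,z^2 + d\,xy + e\,xz + f\,yz,
\]
and compute $Q\circ\phi_s = a\,x^2 + b\,y^2 + c\,z^2 + d\,xy - e\,xz - f\,yz$. Thus $\phi_s$ acts diagonally on the six-dimensional space of quadratic forms, with eigenvalue $+1$ on $W:=\langle x^2,y^2,z^2,xy\rangle$ and eigenvalue $-1$ on $\langle xz,yz\rangle$. Now let $C=\{Q=0\}$ be an irreducible conic preserved by $s$. Since $s^2=\mathrm{id}$, the conic $s(C)$ is defined by $Q\circ\phi_s$; and since the defining form of an irreducible conic is unique up to a nonzero scalar, $s(C)=C$ forces $Q\circ\phi_s=\lambda Q$ for some $\lambda\in\R\setminus\{0\}$. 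Comparing with the eigenspace decomposition above, this is possible only if either $\lambda=1$ and $e=f=0$, or $\lambda=-1$ and $a=b=c=d=0$.

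Finally, the second alternative cannot occur for an irreducible conic: if $\lambda=-1$ then $Q = e\,xz + f\,yz = z\,(e x + f y)$ is a product of two linear forms, so $C$ is reducible, a contradiction. Hence $\lambda=1$ and $e=f=0$, i.e.\ $Q\in W$, and $\dim_\R W=4$. This shows that the defining forms of all irreducible conics preserved by $s$ lie in the four-dimensional space $W$, as claimed. I do not expect any genuine obstacle here; the only content beyond routine coefficient comparison is the normalization supplied by \Cref{prop:sym} and the one-line observation that the $\lambda=-1$ branch yields only reducible conics (which is exactly why the space has dimension $4$ rather than $6$).
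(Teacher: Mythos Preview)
Your proof is correct and follows essentially the same approach as the paper: normalize $s$ to $\mathrm{diag}(-1,-1,1)$ via \Cref{prop:sym} and read off the four-dimensional invariant subspace $\langle x^2,y^2,z^2,xy\rangle$. You are in fact more careful than the paper, which simply asserts the answer; your explicit treatment of the scalar ambiguity $Q\circ\phi_s=\lambda Q$ and the exclusion of the $\lambda=-1$ branch (because it yields only reducible conics) fills in a step the paper leaves implicit.
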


\begin{proof}
Without loss of generality, we may assume that $s$ is the map $(x:y:z)\mapsto(-x:-y:z)$.
Then the set of conics that are invariant under $s$
is the set of conics with an equation of the form $ax^2+bxy+cy^2+dz^2=0$, with $a,b,c,d\in\R$.
\end{proof}

If we have the pictures of three surfaces of revolution, then we get enough linear equations to compute $C_E$, assuming that the
linear equations are linear independent.

\section{The Torus}
\label{sec:3}

Recall that the points in dual projective space $(\P^3)^\ast$ are in bijective correspondence with the planes in $\P^3$. The dual of an algebraic
surface $S$ is the Zariski closure of the subset of $(\P^3)^\ast$ consisting of points corresponding to tangent planes to $S$ at nonsingular points.
If $H\subset\P^3$ is a plane, and $C\subset H$ is a planar curve, then the dual $C^\ast$ is defined as the Zariski closure of the set of points
corresponding to planes that contain a tangent line to $C$ at a nonsingular points. This is a singular cone in $(\P^3)^\ast$ and its vertex is the
point $H^\ast$ corresponding to the plane $H$. In particular, the dual $C_A^\ast$ of the Euclidean absolute is a singular quadric cone.

A torus is a quartic surface in $T\subset\P^3$ with an equation
\[ (x^2+y^2+z^2+aw^2)^2-b(x^2+y^2)w^2=0 \]
in suitable coordinates, where $b>0$ and $a>0$ are real parameters.

With dual coordinates $\bar{x},\bar{y},\bar{z},\bar{w}$, the equation for the dual $T^\ast$ of the torus becomes
\[ 16(a\bar{x}^2+a\bar{y}^2+a\bar{z}^2+\bar{w}^2)^2-8b(\bar{x}^2+\bar{y}^2)(a\bar{z}^2+2\bar{w}^2)+(b^2-8ab)\bar{z}^4-8b\bar{z}^2\bar{w}^2 = 0, \]
so $T^\ast$ is also a quartic surface. It is well-known (\cite{young}) that $T^\ast$ is projectively isomorphic to $T$; it would be
interesting to find an explicit isomorphism, but we have to leave it as an unsolved exercise.

\begin{figure}
\begin{center}
\includegraphics[height=3cm]{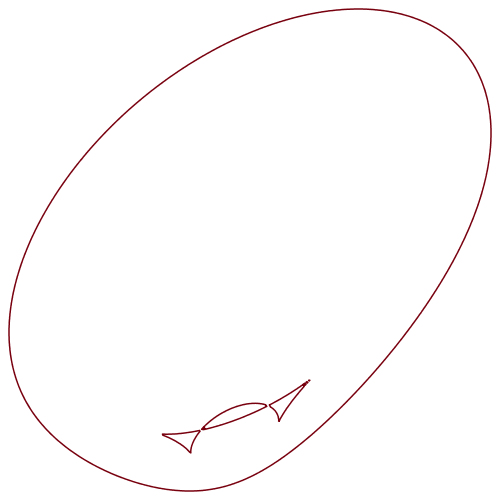}\hspace{3cm}
\includegraphics[height=3cm]{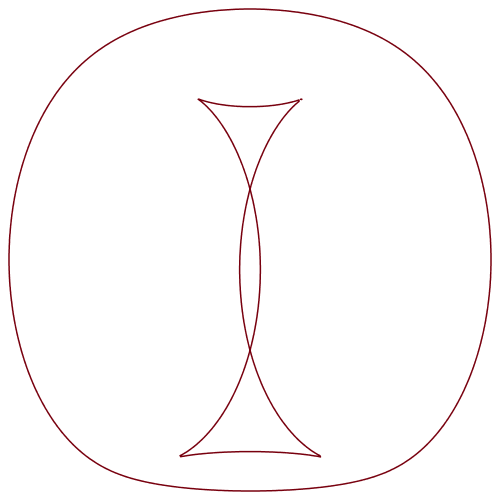}
\caption{The picture of a torus is a curve of degree~8 that may have two real components. Here are two such pictures. Both curves
have two real nodes and four real cusps. They also have six nonreal nodes and eight nonreal cusps; the nonreal nodes play a role in
the calibration algorithm.}
\label{fig:torus}
\end{center}
\end{figure}

The picture $T'$ of the torus (see Figure~\ref{fig:torus}) is a subset of the projective plane whose points correspond to lines through the center $o$.
Lines in the projective plane correspond to planes through~$o$. Tangent lines to~$T'$ correspond to tangent planes to $T$ through $o$.
This implies that the dual~$T'^\ast$ of~$T'$ is the intersection of the dual~$T^\ast$ of the torus with the plane~$o^\ast\subset(\P^2)^\ast$
(see also \cite{Jan}).
We call $T'^\ast$ the {\em dual picture} of the torus.

Notice also that the singular quadric cone $C_A^\ast$ intersects the plane $o^\ast$ in a conic. This conic is dual to the conic $C_E$ in the image plane
of the central projection. We denote it by $C_E^\ast$ and call it the {\em dual elliptic absolute}.

\begin{lemma} \label{lem:dual}
The dual picture of a torus from a generic projection center is a plane quartic curve
that intersects the dual elliptic absolute
tangentially in all its intersection points.
\end{lemma}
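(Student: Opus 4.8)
The plan is to reduce the claim to a statement about the original (non-dual) torus $T$ and the Euclidean absolute $C_A$, and then transport it through duality. The key observation is that tangency of two plane curves at a point is a projectively invariant, hence self-dual, notion once it is phrased correctly; so I would first establish the corresponding incidence-with-multiplicity statement ``upstairs'' in $\P^3$ and then dualize. Concretely, a point of $T'^\ast \cap C_E^\ast$ corresponds to a plane $H$ through the center $o$ that is simultaneously tangent to the torus $T$ (because $T'^\ast = T^\ast \cap o^\ast$) and tangent to the Euclidean absolute conic $C_A$ (because $C_E^\ast = C_A^\ast \cap o^\ast$, and $C_A^\ast$ consists of planes meeting $C_A$ tangentially, i.e.\ planes through $o$ containing a tangent line to $C_A$, equivalently planes tangent to $C_A$ at one of its two complex points). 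So the intersection points of the two dual curves correspond to the planes through $o$ that are bitangent: tangent to $T$ and tangent to $C_A$.

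Next I would analyze these bitangent planes. Since $C_A$ is the conic ``$w=0$, $x^2+y^2+z^2=0$'' in suitable coordinates and the torus has the stated rotationally symmetric equation, the planes tangent to $C_A$ are exactly the planes through one of the two circular points at infinity on a given (isotropic) line; equivalently, the planes $H$ that meet the infinite plane $w=0$ in a line tangent to $C_A$. I claim that such a plane $H$, if also tangent to $T$, is in fact tangent to $T$ at a point lying on $C_A$ itself, or more precisely that the tangency of $T'^\ast$ and $C_E^\ast$ at the corresponding dual point is forced by the fact that $C_A \subset T$. Indeed, the circle at infinity $C_A$ lies on the torus: substituting $w=0$ into the torus equation gives $(x^2+y^2+z^2)^2=0$, so $C_A$ is a component of $T \cap \{w=0\}$, in fact a double component. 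This is the crucial geometric input. Because $C_A$ lies on $T$ (with multiplicity two on the infinite section), every plane containing a tangent line of $C_A$ is tangent to $T$ along that tangent line to second order, which is exactly the kind of higher contact that dualizes to tangency of the dual curves.

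So the technical heart is: translate ``$C_A$ is a double curve on $T \cap \{w=0\}$'' into the precise statement ``$T^\ast$ contains $C_A^\ast$, and moreover $T^\ast$ is tangent to $C_A^\ast$ along $C_A^\ast$'' (both are quadric-cone-related loci of bitangent planes), and then intersect both with the generic plane $o^\ast$: a generic plane section of two surfaces tangent along a curve meets them in two curves tangent at the points where $o^\ast$ crosses that curve. I expect the main obstacle to be making the multiplicity bookkeeping rigorous — verifying that the contact of $T^\ast$ with $C_A^\ast$ along $C_A^\ast$ really is a genuine tangency of the two 2-dimensional varieties in $(\P^3)^\ast$ (not merely containment), and then checking that a \emph{generic} choice of $o$ (hence of the plane $o^\ast$) transversally slices this tangency locus so that the resulting plane curves $T'^\ast$ and $C_E^\ast$ are tangent, and tangent \emph{only} along $C_E^\ast$, i.e.\ that $T'^\ast$ has no further, transverse intersections with $C_E^\ast$ coming from bitangent planes of a different nature. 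One clean way to settle both points is a direct computation using the explicit equation of $T^\ast$ given above together with the equation $a\bar x^2 + a\bar y^2 + a \bar z^2 + \bar w^2 = 0$ of $C_A^\ast$ (read off from the leading term of the $T^\ast$ equation, which up to the factor $16$ is the square of that quadric plus lower-degree-in-$b$ corrections): one shows $T^\ast$ equals $16\,q^2 - b\,(\text{stuff})$ where $q$ cuts out $C_A^\ast$, so $T^\ast$ is $\{16 q^2 \equiv 0 \bmod b\}$ and its intersection with $\{q=0\}$ is non-reduced, forcing tangency of the slices. I would present that algebraic identity as the formal backbone and the bitangent-plane picture above as the conceptual explanation.
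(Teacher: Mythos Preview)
Your overall strategy coincides with the paper's: show that $T^\ast$ and $C_A^\ast$ meet with multiplicity two along a curve in $(\P^3)^\ast$, then slice both by the generic plane $o^\ast$ to conclude that $T'^\ast$ and $C_E^\ast$ are everywhere tangent. The paper does exactly this, by a one-line computation.

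There is, however, a concrete error in your execution. The quadric $a\bar x^2+a\bar y^2+a\bar z^2+\bar w^2=0$ is \emph{not} $C_A^\ast$; it is the dual of the smooth sphere $x^2+y^2+z^2+aw^2=0$. The Euclidean absolute $C_A$ is a \emph{conic curve} in the plane $\{w=0\}$, and by the paper's definition its dual is the singular cone with vertex $\{w=0\}^\ast=(0{:}0{:}0{:}1)$, namely
\[
C_A^\ast=\{\bar x^2+\bar y^2+\bar z^2=0\}.
\]
With your $q$ the restriction $T^\ast|_{q=0}$ is not a perfect square, so your proposed ``algebraic backbone'' does not close. With the correct equation, substituting $\bar x^2+\bar y^2=-\bar z^2$ into the displayed formula for $T^\ast$ gives
\[
T^\ast\big|_{C_A^\ast}=16\bar w^4+8b\bar z^2\bar w^2+b^2\bar z^4=(4\bar w^2+b\bar z^2)^2,
\]
which is the paper's computation: $T^\ast\cap C_A^\ast$ is the quartic curve $Q=\{\bar x^2+\bar y^2+\bar z^2=0,\ b\bar z^2+4\bar w^2=0\}$, taken with multiplicity two. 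Slicing by a generic plane $o^\ast$ then yields the tangential intersection of $T'^\ast$ with $C_E^\ast$.

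Your geometric framing (bitangent planes, $C_A$ as a double curve on the infinite section of $T$) is a pleasant conceptual complement, but, as you yourself anticipate, it does not by itself pin down the multiplicities; the paper simply does the computation and so should you, with the corrected $C_A^\ast$.
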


\begin{proof}
Straightforward computation shows that the two surfaces $T^\ast$ and $C_A^\ast$ intersect
in the quartic curve $Q$ defined by $\bar{x}^2+\bar{y}^2+\bar{z}^2$ and $b\bar{z}^2+4\bar{w}^2$, with intersection multiplicity two.
The dual picture is the intersection of $T^\ast$ with a generic plane. 
The intersection points of the dual picture and $C_E^\ast$ are the intersections of $Q$ with the same generic plane,
and they are therefore also double intersections.
\end{proof}

\begin{lemma} \label{lem:nodes}
The picture of a torus from a generic projection center is a plane curve of degree eight. It has two nodes lying on the elliptic absolute.
\end{lemma}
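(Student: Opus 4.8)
The statement has two parts, the degree and the two nodes on $C_E$, and I would treat them separately, leaning on \Cref{lem:dual} and on the local structure of the torus along $C_A$.

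For the degree I would use biduality. The picture $T'$ is the dual curve of the dual picture $T'^\ast$; since $T'^\ast=T^\ast\cap o^\ast$ is a generic hyperplane section of the irreducible quartic surface $T^\ast$ it is an irreducible quartic, and $T'$ has no line component, so $(T'^\ast)^\ast=T'$ and hence $\deg T'$ equals the class of $T'^\ast$. By the first Plücker formula the class of a quartic with $\delta$ nodes and $\kappa$ cusps is $12-2\delta-3\kappa$, so it is enough to check that for a generic centre $T'^\ast$ has exactly two nodes and no cusps. This follows because $T^\ast$ is projectively isomorphic to the torus $T$ (see \cite{young}, or check it directly from the quartic equation of $T^\ast$), and $T$ is singular exactly along the conic $C_A$ — the gradient of the torus polynomial vanishes there — with transverse singularity of type $A_1$ away from the two points of $C_A$ in the plane $z=0$. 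Thus $T^\ast$ is singular along a conic, generically nodal; a generic plane meets that conic in two generic points, so $T'^\ast$ acquires two nodes and nothing worse, and its class is $12-4=8$.

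For the two nodes on $C_E$ I would work in $\mathbb{P}^3$. Since $C_E=\pi(C_A)$, a point of $T'$ lies on $C_E$ precisely when the line through $o$ over it meets $C_A$, and a node of $T'$ comes from a line through $o$ bitangent to $T$ (possibly tangent to branches of $T$ lying along $\mathrm{Sing}(T)=C_A$). A local expansion of $T$ at a point $c=(x_0:y_0:z_0:0)$ of $C_A$ gives tangent cone $\ell^2+b\,z_0^{2}w^2$ in suitable local coordinates, so the transverse node degenerates to a double plane exactly over the two points $c_\pm=(1:\pm i:0:0)$; dually these are the two singular points of the quartic $Q=T^\ast\cap C_A^\ast$, which by the proof of \Cref{lem:dual} is the union of two conics meeting in two points. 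The plan is then to show that, for a generic centre $o$, the two lines $\overline{o\,c_\pm}$ are exactly the lines through $o$ whose $\pi$-image is a node of $T'$ lying on $C_E$: one resolves the singularity of $T$ at $c_\pm$ and checks that the apparent contour, viewed from a generic direction, realises a node at $\pi(c_\pm)$, and one checks that a line meeting $C_A$ at a point where $T$ has only an ordinary transverse node contributes a smooth point or a cusp of $T'$, not a node.

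The main obstacle is exactly this last local analysis at $c_\pm$. Because the transverse singularity there is worse than an ordinary node, one must determine which plane-curve singularity the apparent contour actually acquires at $\pi(c_\pm)$ for a generic projection direction — a node rather than a cusp, a tacnode, or a smooth point — and at the same time rule out further nodes of $T'$ on $C_E$ coming either from the generic part of $C_A$ or from honest bitangent lines of $T$ (a dimension count suggests the latter generically avoid $C_A$). In practice I expect the most reliable route is a direct symbolic computation of the degree-$8$ picture for a symbolic (hence generic) projection centre, in the same spirit as the ``straightforward computation'' invoked in the proof of \Cref{lem:dual}, and to read off the two nodes on $C_E$ from the outcome.
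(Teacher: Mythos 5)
The degree-eight half of your argument is correct and genuinely different from the paper's: the paper simply quotes \cite{HMS} (degree of the picture of a Darboux cyclide with a nodal double curve), whereas you obtain it from biduality $T'=(T'^\ast)^\ast$ together with the Pl\"ucker class formula for the quartic $T'^\ast$ with exactly two nodes. That route is self-contained modulo two checkable points you only sketch: that $\mathrm{Sing}(T^\ast)$ is a conic with transverse $A_1$ singularities away from finitely many points (which follows from $T\cong T^\ast$ \cite{young} and the tangent-cone computation you indicate), and a genericity statement guaranteeing that the plane $o^\ast$ meets that conic in two points where the section acquires nodes and nothing worse.

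The second half, however, is where the lemma actually lives, and there you stop short. You correctly isolate the two special points $c_\pm=(1:\pm i:0:0)$ of $C_A$ — the same two points the paper reaches by viewing the torus as the stereographic projection of a ring cyclide in $\P^4$, two of whose isolated nodes land on the Euclidean absolute — but the decisive claim, that for a generic centre the contour has a \emph{node} (rather than a cusp, a tacnode, or a smooth point) at $\pi(c_\pm)$, is exactly what you label the ``main obstacle'' and then replace by an unexecuted symbolic computation. Since this claim is the content of the lemma, the proposal as written does not prove it. The paper closes the step with a short local argument: the two points are images of nodal points of the cyclide, and the picture of a quadric-cone germ from a generic centre is a pair of lines crossing at the image of the vertex, hence a node of $T'$; as the points lie on $C_A$, the resulting nodes lie on $C_E=\pi(C_A)$. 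Your own observation that the tangent cone of the quartic at $c_\pm$ degenerates to a double plane shows that this local analysis is genuinely delicate (and is best carried out on the cyclide model rather than on the quartic equation), but noticing the difficulty is not the same as resolving it. Note also that you aim for the stronger statement that $\pi(c_\pm)$ are the \emph{only} nodes of $T'$ on $C_E$; the lemma asserts, and the paper proves, only the existence of two such nodes, which is all the calibration algorithm needs, so the extra burden (ruling out bitangents through $o$ meeting $C_A$, etc.) is unnecessary.
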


\begin{proof}
The degree of the picture of any Darboux cyclide with a nodal double curve is eight, by \cite{HMS}. The torus has $C_A$ as its nodal curve, which shows
the first claim.

The torus is the stereographic projection of a \emph{ring cyclide},
namely a quartic surface in the projective sphere in $\P^4$
that has two pairs of complex conjugate isolated nodal singularities
and is covered by four pencils of circles (see \citep[Example~16]{2023d}).
Moreover, two of these pencils each have two complex conjugate base
points at these singularities.
The conics in one of these pencils are projected to a pencil of circles
on the torus in $\P^3$ lying in parallel planes. Thus,
these parallel planes meet at a line at infinity. This line
meets the Euclidean absolute in two points.
Since the intersection of the torus with the plane at infinity is the
Euclidean absolute,
we deduce that these two points correspond to base points of the pencil.
It follows that the stereographic projection maps two of the singular
points
onto the Euclidean absolute.

We claim that the central projection maps the nodal point of the torus to a nodal
point of its picture. To see this, think of the vertex of a singular quaric cone.
(It suffices to analyse just one case, because any two nodal points are analytically
isomorphic.) The picture of the cone consists of two lines, intersecting in the
image of the vertex.

As the nodal point of the torus lies on the Euclidean absolute, the nodal point
of the picture lies on the elliptic absolute.
\end{proof}

From now on, we assume that both the image $T'$ and the dual image $T'^\ast$ are known. This is clearly redundant, because any curve
determines its dual, hence $T'$ determines $T'^\ast$ and conversely $T'^\ast$ determines $T'$.
However, computing the dual of a plane curve may be expensive.
Moreover, the starting point in applications is an image of a curve. Such an image allows to obtain, through measurement,
points on the curve but also tangents to the curve; and these are the basis for deriving the defining forms for $T'$ and for $T'^\ast$.
We mention that the task of finding the defining forms is numerically non-trivial (see \cite{WMC} for an analysis of the effects of
numerical noise).
Here we concentrate on the task of finding the defining form for $C_E$. This is equivalent to finding the form for
$C_E^\ast$: the symmetric matrix associated to the quadratic form defining $C_E^\ast$ is the inverse of the symmetric matrix associated
to the quadratic form defining $C_E$, up to scalar multiplication.

\Cref{lem:nodes,lem:dual} give constraints for the elliptic absolute and for the dual elliptic absolute.
More precisely, \Cref{lem:nodes} gives rise to two linear conditions on the coefficients of the quadratic form defining $C_E$,
and \Cref{lem:dual} gives four nonlinear conditions on the coefficients of the quadratic form defining $C_E^\ast$. In addition,
the torus is also a surface of revolution and its picture has a reflection symmetry. As explained in \Cref{sec:2},
the projective reflection symmetry~$s$ of~$T'$ also
gives rise to two linear conditions on the coefficients of the quadratic form defining $C_E$. Equivalently, we can get
to two linear conditions on the coefficients of the quadratic form defining $C_E^\ast$, using the dual projective reflection~$s^\ast$,
which preserves $T'^\ast$ and whose matrix representation is the transpose of the matric representation of~$s$.

\begin{remark}
The generic picture of a torus is a quartic of genus one with 8 nodes and 12 cusps (possibly complex). Any projective symmetry must
send nodes to nodes and cusps to cusps. By checking
random examples, it can be checked that a generic genus one quartic does not have any projective symmetry, and that a generic picture
of a torus has just a single projective symmetry. On the other hand, the picture of a torus by parallel projection has
three projective symmetries.
\end{remark}

The constraints above are not independent. The two nodes in \Cref{lem:nodes} are related by the symmetry $s$.
Any conic that is invariant under $s$ and that passes through one node also passes through the second node. Similarily,
any conic that is invariant under $s'$ that has a tangential intersection point with $T'^\ast$ also has a second tangential intersection
point symmetric to it.

In order to find if a potential candidate (maybe with unknown symbolic coefficients) intersects a given curve tangentially everywhere,
we compute the resultant of the defining forms, and test if the resultant is a perfect square. Finding the subset of polynomials
of degree~$2k$ that are perfect squares is done by comparing the polynomial with the square of a general polynomial of degree~$k$
with unknown coefficients. We obtain $2k+1$ equations in~$k+1$ additional variables. The additional variables can easily
by eliminated by the available elimination algorithm in Maple. We also get inequality conditions: the Hessian of the candidate
has to be nonzero, otherwise the elliptic absolute would not be irreducible; and the candidate should not pass through a singular
point of~$T'^\ast$, otherwise some of the tangential intersections would degenerate.

The method is summarized by \Cref{alg2}.

\renewcommand{\thealgorithm}{TorusCalibration} 
\begin{algorithm}[ht]
\caption{}\label{alg2}
\begin{algorithmic}[1]
  \Require a torus picture $T'\subset\P^2$ from a generic point and its dual $T'^\ast\in(\P^2)^\ast$.
  \Ensure a finite list of candidates that include the dual absolute conic $C_E^\ast$.
  \Statex
  \State Compute the projective reflection $s^\ast:(\P^2)^\ast\to (\P^2)^\ast$ that preserves $T'^\ast$, using \Cref{alg}.
  \State Compute the vector space $V$ of quadratic forms that are preserved by $s^\ast$ ($\dim(V)=4$).
  \State Select a non-real node in $n\in T'$ that is not fixed by $s$ (the dual of $s^\ast$).
  \State Compute the set $S_1$ of quadratic forms $f\in V$ such that the line $n^\ast$ is tangential to the zero set of $f$ (three-dimensional).
  \State Compute the set $S_2$ of quadratic nondegenerate forms $g\in V$ such that $T'^\ast$ intersects the zero set of $f$
	tangentially in every point (two-dimensional).
  \State Solve the union of the defining equations for $S_1$ and $S_2$ and \Return{all solutions} (finitely many up to scalar multiplication).
\end{algorithmic}
\end{algorithm}

\begin{example} \label{ex:fin}
The genus one quartic $T'^\ast$ with equation
\begin{gather*}
\bar{x}^{4}+\left(-\tfrac{81}{32} \bar{y}^{2}-\tfrac{32}{81} \bar{y} \bar{z} -\tfrac{8}{9} \bar{z}^{2}\right) \bar{x}^{2}+\tfrac{6561 \bar{y}^{4}}{4096}+\tfrac{\bar{y}^{3} \bar{z}}{2}-\tfrac{10 \bar{y}^{2} \bar{z}^{2}}{9}
\\
-\tfrac{1755136 \bar{y} \,\bar{z}^{3}}{77058945}+\tfrac{3008303104 \bar{z}^{4}}{905057309025} = 0
\end{gather*}
has a unique projective symmetry $s:(\bar{x}:\bar{y}:\bar{z})\mapsto (-\bar{x}:\bar{y}:\bar{z})$ and 8 bitangents.
In particular, the two complex lines $\bar{x}\pm\ci\bar{y}=0$ are bitangents. They correspond to two complex nodes of the dual curve $T'$, with coordinates
$(x:y:z)=(1:\pm\ci:0)$. This quartic has been constructed by setting up a quartic with symbolic coefficients that has symmetry $s$,
and then solving a system of equation imposed by the constraint that
$\bar{x}\pm\ci\bar{y}=0$ is a bitangent and by the constraint that $T'^\ast$ has genus one.

The general quadratic form in $\bar{x},\bar{y},\bar{z}$ that is invariant under $s^\ast$ is of the form
$a_1\bar{x}^2+a_2\bar{y}^2+a_3\bar{y}\bar{z}+a_4\bar{z}^2$. The lines $\bar{x}\pm\ci\bar{y}=0$ are tangents if and only if
$4a_1a_4-4a_2a_4+a_3^2=0$; this equation is the discriminant of the univariate polynomial obtained by plugged a parametrization of one of
the lines into the quadratic form (both lines give the same equation).
The system of equations arising from the
tangential intersection constraint is too big to be written here.
We refer to the Maple script available at \cite{2024code}.
The Hessian is $2a_1(4a_2a_4-a_3^2)$.
The full system of equations and inequalities has exactly one solution. We instantiate the general
quadratic form by this one solution and compute the dual form
\begin{gather*}
80478208 x^{2}+80478208 y^{2}-3692252160 y z +99394940025 z^{2} .
\end{gather*}
The paper \cite{HMS} (see also \cite{GLSV}) contains an algorithm that computes the defining form of a Darboux cyclide when its picture and the elliptic absolute
are known. Because tori are special cases of Darboux cyclides, we can apply that algorithm to $T'$ and get the equation
of the torus (which, again, is too big to be written here).
\end{example}

\section{Conclusion}

This paper leaves two questions open. First, the given algorithm requires exact data, and these are not available when the input is coming
from measurements on a picture. So, an algorithm that is useful in practice must be numerical. This leads to the question how sensitive
the computed output is to small numerical perturbations.

Another question has been raised by a reviewer: can we say something about the number of solutions? The solution in Example~\ref{ex:fin}
was computed using a particular choice of two conjugated nodes in $T'$. In this example, there are six complex nodes outside the
symmetry line. This implies that there are two other choices of a pair of complex conjugate nodes outside the symmetry plane.
So there could be two more solutions.

\section*{Acknowledgements}

Financial support for Niels Lubbes was provided by the Austrian Science Fund (FWF): P36689.
Josef Schicho was supported by the European Union's Horizon 2020 programme in the frame of the MSCN 860843 (GRAPES) project.

\clearpage

\textbf{Addresses of authors:}
\\Institute for Algebra, Johannes Kepler University, Linz, Austria
\\\url{info@nielslubbes.com}
\\Research Institute for Symbolic Computation (RISC),
Johannes Kepler University, Linz, Austria
\\\url{josef.schicho@risc.jku.at}

\end{document}